\newtheorem{lemma}{Lemma}[section]
\newtheorem{thm}[lemma]{Theorem}
\newtheorem{prop}[lemma]{Proposition}
\newtheorem{cor}[lemma]{Corollary}
\newtheorem{claim}[lemma]{Claim}
\newtheorem{defn}[lemma]{Definition}
\newcommand\matZ{{\mathbb{Z}}}
\newcommand\matR{{\mathbb{R}}}
\newcommand\matN{{\mathbb{N}}}
\renewcommand{\hbar}{{\overline{h}}}
\newfont{\Got}{eufm10 scaled 1200}
\newcommand{\mycap} [1] {\caption{\footnotesize{#1}}}
\newcommand{\stab}{\mathop{\rm Stab}\nolimits}
\newcommand{\rist}{\mathop{\rm rist}\nolimits}
\newcommand{\Aut}{\mathop{\rm Aut}\nolimits}
\newcommand{\Sym}{\mathop{\rm Sym}\nolimits}
\newcommand\calD{{\mathcal D}}
\newcommand\calP{{\mathcal P}}
\begin{document}

\title{Groups of tree automorphisms as diffeological groups}

\author{Ekaterina~{\textsc Pervova}}

\maketitle

\begin{abstract}
\noindent We consider certain groups of tree automorphisms as
so-called diffeological groups. The notion of diffeology, due to
Souriau, allows to endow non-manifold topological spaces, such as
regular trees that we look at, with a kind of a differentiable
structure that in many ways is close to that of a smooth manifold; a
suitable notion of a diffeological group follows. We first study the
question of what kind of a diffeological structure is the most
natural to put on a regular tree in a way that the underlying
topology be the standard one of the tree. We then proceed to
consider the group of all automorphisms of the tree as a
diffeological space, with respect to the functional diffeology,
showing that this diffeology is actually the discrete one.

\noindent MSC (2010): 53C15 (primary), 57R35 (secondary).
\end{abstract}

\section*{Introduction}

The notion of a \emph{diffeological structure}, or simply
\emph{diffeology}, due to J.M. Souriau \cite{So1,So2}, appeared in
Differential Geometry as part of the quest to generalize the notion
of a smooth manifold in a way that would yield a category closed
under the main topological constructions yet carrying sufficient
geometric information. To be more precise, it is well-known that the
category of smooth manifolds, while being the main object of study
in Differential Geometry, is not closed under some of the basic
topological constructions, such as taking quotients or function
spaces, nor does it include objects which in recent years attracted
much attention both from geometers and mathematical physicists, such
as irrational tori, orbifolds, spaces of connections on principal
bundles in Yang-Mills field theory, to name just a few. Many
fruitful attempts, some of which are summarized in \cite{St}, had
been made to address these issues, notably in the realm of
functional analysis and noncommutative geometry, via smooth
structures \`a la Sikorski or \`a la Fr\"olicher; each of these
attempts however had its own limitations, be that the sometimes
exaggerated technical complexity or missing certain topological
situations (such as singular quotients, missing from Sikorski's and
Fr\"olicher's spaces).

The diffeology, whose birth story is beautifully described in the
Preface and Afterword of the excellent book \cite{iglesiasBook}, has
the advantage of being possibly the least technical (and therefore
very easy to work with) and, much more importantly, very wide in
scope. Indeed, the category of diffeological spaces contains, on one
hand, smooth manifolds as a full subcategory, and is very
well-behaved on the other: in particular, it is complete, cocomplete
and cartesian closed (see, for example, Theorems 2.5 and 2.6 in
\cite{CSW_Dtopology}).

As for diffeological groups, they were in fact the context in which
the notion of diffeology was introduced; the very titles of the
already mentioned foundational papers by Souriau are witnesses to
this fact. More precisely, the historical origin of the concept of
``diffeology'' was, as evidenced by Iglesias-Zemmour's fascinating
account of those events in \cite{iglesiasBook}, Souriau's attempt to
regard some types of coadjoint orbits of infinite dimensional groups
of diffeomorphisms as Lie groups, and to do so in ``the simplest
possible manner''. On the other hand, as mentioned in Chapter 7 of
\cite{iglesiasBook}, the theory of diffeological groups has not yet
been much developed.

\paragraph{What does this have to do with groups of tree
automorphisms?} Before answering this question, it should be useful
to say right away what we mean by a ``tree''; and to give the idea
of what is done in this paper, it should suffice to point out that
all trees under consideration are infinite, rooted, and
\emph{regular}. The meaning of the latter is this: we fix an integer
$p\geqslant 2$ and consider an infinite tree with precisely one
vertex of valence $p$ (this is the root) and all other vertices of
valence $p+1$. Such an object is a very natural venue for applying
the notion of diffeology: on one hand, it is a topological space
quite different from a (one-dimensional) manifold, since it contains
an infinite (albeit discrete) set of points whose local
neighbourhood is a cone over at least three points, and on the other
hand, there is a natural diffeological structure to put on it, the
so-called ``wire diffeology'' (see below). This fact in itself
raises a number of questions, for reasons of intellectual curiosity
at least if nothing else, such as, will the \emph{D-topology} be
different or equal to the standard topology of the tree?

Now, groups of automorphisms of such a tree, even restricted to a
rather specific construction such as the one we will deal with
(which is however independently interesting from the algebraic point
of view, see the foundational paper \cite{GrigFoundation}) are
easily seen to be groups of diffeomorphisms of the tree with respect
to the above diffeological structure. The category of diffeological
spaces being closed under taking groups of diffeomorphisms, they
become in the end diffeological groups; and since they are also
topological groups with respect to, for instance, profinite topology
(but occasionally there are some others, see, for instance,
\cite{mioJalg}), the same questions about comparing the two
topologies arise... And going further still, the question becomes,
\emph{what kind of information about these groups can we obtain if
we regard them as diffeological groups?}

\paragraph{The content} The first two sections are
devoted to recalling some of the main definitions and constructions
related to, respectively, diffeological spaces and (certain kind of)
groups of automorphisms of regular rooted trees; they gather
together everything that is used henceforth, \emph{i.e.} in Sections
3 and 4. The first of these two deals with the choice of the
diffeology to put on the tree, showing in the end that the topology
corresponding to the final choice (the so-called D-topology) is
indeed the one coinciding with that of the tree in the usual sense.
The last section is devoted to the functional diffeology on the
whole group of tree automorphisms, showing that (for reasons that
apply actually to any subgroup of this group) the functional
diffeology is the discrete one; a finding that is not surprising in
view of the discrete nature of these groups that had originated as
so-called \emph{automata groups} \cite{alyoshin}.

\paragraph{Acknowledgments} This was one of the first papers for me
on the subject, and just completing it felt like a minor
accomplishment. For a, maybe indirect, but no less significant for
that, assistance in that moment I must thank Prof. Riccardo Zucchi,
despite his habit of refuting his merits.\footnote{Originally this
acknowledgment included other people; they are not here anymore.
People disappoint and get disappointed, whoever is at fault; I guess
that's life.}

\section{Diffeological spaces}\label{defn-diffeology-sect}

This section is devoted to a short background on diffeological
spaces, introducing the concepts that we will need in what follows.

\paragraph{The concept} We start by giving the basic definition of a
diffeological space, following it with the definition of the
\emph{standard diffeology} on a smooth manifold; it is this latter
diffeology that allows for the natural inclusion of smooth manifolds
in the framework of diffeological spaces.

\begin{defn} \emph{(\cite{So2})} A \textbf{diffeological space} is a pair
$(X,\calD_X)$ where $X$ is a set and $\calD_X$ is a specified
collection of maps $U\to X$ (called \textbf{plots}) for each open
set $U$ in $\matR^n$ and for each $n\in\matN$, such that for all
open subsets $U\subseteq\matR^n$ and $V\subseteq\matR^m$ the
following three conditions are satisfied:
\begin{enumerate}
  \item (The covering condition) Every constant map $U\to X$ is a
  plot;
  \item (The smooth compatibility condition) If $U\to X$ is a plot
  and $V\to U$ is a smooth map (in the usual sense) then the
  composition $V\to U\to X$ is also a plot;
  \item (The sheaf condition) If $U=\cup_iU_i$ is an open cover and
  $U\to X$ is a set map such that each restriction $U_i\to X$ is a
  plot then the entire map $U\to X$ is a plot as well.
\end{enumerate}
\end{defn}

Typically, we will simply write $X$ to denote the pair
$(X,\calD_X)$. Such $X$'s are the objects of the category of
diffeological spaces; naturally, we shall define next the arrows of
the category, that is, say which maps are considered to be smooth in
the diffeological sense. The following definition says just that.

\begin{defn} \emph{(\cite{So2})} Let $X$ and $Y$ be two
diffeological spaces, and let $f:X\to Y$ be a set map. We say that
$f$ is \textbf{smooth} if for every plot $p:U\to X$ of $X$ the
composition $f\circ p$ is a plot of $Y$.
\end{defn}

As is natural, we will call an isomorphism in the category of
diffeological spaces a \textbf{diffeomorphism}. The typical notation
$C^{\infty}(X,Y)$ will be used to denote the set of all smooth maps
from $X$ to $Y$.

\paragraph{The standard diffeology on a smooth manifold} Every
smooth manifold $M$ can be canonically considered a diffeological
space with the same underlying set, if we take as plots all maps
$U\to M$ that are smooth in the usual sense. With this diffeology,
the smooth (in the usual sense) maps between manifolds coincide with
the maps smooth in the diffeological sense. This yields the
following result (see Section 4.3 of \cite{iglesiasBook}).

\begin{thm}
There is a fully faithful functor from the category of smooth
manifolds to the category of diffeological spaces.
\end{thm}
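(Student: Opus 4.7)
The plan is to define the candidate functor $F$ explicitly and verify the three properties in sequence: that $F$ is a functor, that it is faithful, and that it is full.

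First I would define $F$ on objects by sending a smooth manifold $M$ to the pair $(M,\calD_M)$, where $\calD_M$ consists of all maps $U\to M$ (with $U$ an open subset of some $\matR^n$) that are smooth in the usual sense. The three axioms of a diffeological space are immediate from the standard calculus facts: constant maps are smooth, a smooth map composed with a smooth map (in the ordinary sense) is smooth, and smoothness is a local condition. On morphisms, $F$ assigns to a smooth map $f:M\to N$ the same set map, viewed between the diffeological spaces $(M,\calD_M)$ and $(N,\calD_N)$; this is well-defined because, for any plot $p:U\to M$, the composition $f\circ p$ is a composition of maps smooth in the usual sense and hence is itself a plot of $N$. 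Functoriality (preservation of identities and composition) is inherited trivially from the fact that $F$ acts as the identity on underlying set maps.

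Next, faithfulness is immediate from the same observation: if two smooth maps $f,g:M\to N$ have $F(f)=F(g)$, then they coincide as set maps, hence as smooth maps.

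The main step is fullness. I would fix manifolds $M,N$ and a diffeologically smooth map $f:(M,\calD_M)\to(N,\calD_N)$, and show that $f$ is smooth in the manifold sense. The key observation is that each local parameterization $\varphi:U\to V\subseteq M$, with $U$ open in $\matR^n$ and $\varphi$ a diffeomorphism onto its image, is itself a plot of $(M,\calD_M)$. By the diffeological smoothness of $f$, the composition $f\circ\varphi:U\to N$ must be a plot of $(N,\calD_N)$, i.e.\ smooth in the usual sense. Since on the open set $V$ we can write $f|_V=(f\circ\varphi)\circ\varphi^{-1}$, the map $f$ is smooth on $V$ in the manifold sense; as such sets $V$ cover $M$, this gives smoothness of $f$ globally.

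I do not anticipate any serious obstacle: once the standard diffeology is in place, the content of the theorem essentially amounts to the remark that manifold charts are themselves plots, so that diffeological smoothness of a map between manifolds forces ordinary smoothness.
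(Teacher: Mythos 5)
Your proof is correct and takes exactly the route the paper intends: equip $M$ with the standard diffeology, observe that the functor is the identity on underlying set maps (hence faithful), and obtain fullness from the fact that local parameterizations are themselves plots. The paper states this theorem without proof, deferring to Section 4.3 of Iglesias-Zemmour's book and to the preceding remark that the two notions of smooth map coincide, so your argument simply supplies the standard details that the paper leaves to the citation.
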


\paragraph{Comparing diffeologies} Given a set $X$, the set of all
possibile diffeologies on $X$ is partially ordered by inclusion
(with respect to which it forms a complete lattice). More precisely,
a diffeology $\calD$ on $X$ is said to be \textbf{finer} than
another diffeology $\calD'$ if $\calD\subset\calD'$ (whereas
$\calD'$ is said to be \textbf{coarser} than $\calD$). Among all
diffeologies, there is the finest one, which turns out to be the
natural \textbf{discrete diffeology} and which consists of all
locally constant maps $U\to X$; and there is also the coarsest one,
which consists of \emph{all} possible maps $U\to X$, for all
$U\subseteq\matR^n$ and for all $n\in\matN$. It is called \emph{the}
\textbf{coarse diffeology} (or \textbf{indiscrete diffeology} by
some authors).

\paragraph{Generated diffeology and quotient diffeology} One notion
that will be crucial for us is the notion of a so-called
\textbf{generated diffeology}. Specifically, given a set of maps
$A=\{U_i\to X\}_{i\in I}$, the \textbf{diffeology generated by $A$}
is the smallest, with respect to inclusion, diffeology on $X$ that
contains $A$. It consists of all maps $f:V\to X$ such that there
exists an open cover $\{V_j\}$ of $V$ such that $f$ restricted to
each $V_j$ factors through some element $U_i\to X$ in $A$ via a
smooth map $V_j\to U_i$. Note that the standard diffeology on a
smooth manifold is generated by any smooth atlas on the manifold,
and that for any diffeological space $X$, its diffeology $\calD_X$
is generated by $\cup_{n\in\matN}C^{\infty}(\matR^n,X)$.

Note that one useful property of diffeology as concept is that the
category of diffeological spaces is closed under taking quotients.
To be more precise, let $X$ be a diffeological space, let $\cong$ be
an equivalence relation on $X$, and let $\pi:X\to Y:=X/\cong$ be the
quotient map. The \textbf{quotient diffeology} (\cite{iglOrb}) on
$Y$ is the diffeology in which $p:U\to Y$ is the diffeology in which
$p:U\to Y$ is a plot if and only if each point in $U$ has a
neighbourhood $V\subset U$ and a plot $\tilde{p}:V\to X$ such that
$p|_{V}=\pi\circ\tilde{p}$.

\paragraph{Sub-diffeology and inductions} Let $X$ be a diffeological
space, and let $Y\subseteq X$ be its subset. The
\textbf{sub-diffeology} on $Y$ is the coarsest diffeology on $Y$
making the inclusion map $Y\hookrightarrow X$ smooth. It consists of
all maps $U\to Y$ such that $U\to Y\hookrightarrow X$ is a plot of
$X$. This definition allows also to introduce the following useful
term: for two diffeological spaces $X,X'$ a smooth map $f:X'\to X$
is called an \textbf{induction} if it induces a diffeomorphism
$X\to\mbox{Im}(f)$, where $\mbox{Im}(f)$ has the sub-diffeology of
$X$.

\paragraph{Sums of diffeological spaces} Let $\{X_i\}_{i\in I}$ be a
collection of diffeological spaces, with $I$ being some set of
indices. The \textbf{sum}, or the \textbf{disjoint union}, of
$\{X_i\}_{i\in I}$ is defined as
$$X=\coprod_{i\in I}X_i=\{(i,x)\,|\,i\in I\mbox{ and }x\in X_i\}.$$
The \textbf{sum diffeology} on $X$ is the finest diffeology such
that the natural injections $X_i\to\coprod_{i\in I}X_i$ are smooth
for each $i\in I$. The plots of this diffeology are maps
$U\to\coprod_{i\in I}X_i$ that are \emph{locally} plots of one of
the components of the sum.

\paragraph{The diffeological product} Let, again, $\{X_i\}_{i\in I}$ be a
collection of diffeological spaces, and let $\calD_i$, $i\in I$, be
their respective diffeologies. The \textbf{the product diffeology}
$\calD$ on the product $X=\prod_{i\in I}X_i$ is the \emph{coarsest}
diffeology such that for each index $i\in I$ the natural projection
$\pi_i:\prod_{i\in I}X_i\to X_i$ is smooth.

\paragraph{Functional diffeology} Let $X$, $Y$ be two diffeological
spaces, and let $C^{\infty}(X,Y)$ be the set of smooth maps from $X$
to $Y$. Let \textsc{ev} be the \emph{evaluation map}, defined by
$$\mbox{\textsc{ev}}:C^{\infty}(X,Y)\times X\to Y\mbox{ and }\mbox{\textsc{ev}}(f,x)=f(x). $$
The words ``functional diffeology'' stand for \emph{any} diffeology
on $C^{\infty}(X,Y)$ such that the evaluation map is smooth; note,
for example, that the discrete diffeology is a functional
diffeology. However, they are typically used, and we also will do
that from now on, to denote \emph{the coarsest} functional
diffeology.

There is a useful criterion for a given map to be a plot with
respect for the functional diffeology on a given $C^{\infty}(X,Y)$,
which is as follows.

\begin{prop}\label{criterio-funct-diff} \emph{(\cite{iglesiasBook}, 1.57)}
Let $X$, $Y$ be two diffeological spaces, and let $U$ be a domain of
some $\matR^n$. A map $p:U\to C^{\infty}(X,Y)$ is a plot for the
functional diffeology of $C^{\infty}(X,Y)$ if and only if the
induced map $U\times X\to Y$ acting by $(u,x)\mapsto p(u)(x)$ is
smooth.
\end{prop}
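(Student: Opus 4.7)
The plan is to prove the two implications by unwinding the definition of the functional diffeology as the \emph{coarsest} diffeology on $C^\infty(X,Y)$ making \textsc{ev} smooth.

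For the forward direction, assume $p:U\to C^\infty(X,Y)$ is a plot of the functional diffeology. I will factor the induced map as $\hat p=\mbox{\textsc{ev}}\circ(p\times\mathrm{id}_X)$, where $\hat p(u,x)=p(u)(x)$. The product map $p\times\mathrm{id}_X:U\times X\to C^\infty(X,Y)\times X$ is smooth: a plot of the product $U\times X$ has the form $(r_1,r_2):V\to U\times X$ with $r_1$ smooth in the ordinary sense and $r_2$ a plot of $X$, and its image $(p\circ r_1,r_2)$ has a plot in each factor (the first by smooth compatibility applied to $p$ and $r_1$), hence is a plot of the product $C^\infty(X,Y)\times X$. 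Since \textsc{ev} is smooth by definition of the functional diffeology, $\hat p$ is smooth as a composition of smooth maps.

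For the converse, I introduce the collection $\calD^*$ of maps $p:U\to C^\infty(X,Y)$ (as $U$ ranges over open subsets of the $\matR^n$'s) whose induced $\hat p$ is smooth, and verify that $\calD^*$ is itself a diffeology on $C^\infty(X,Y)$: the covering condition is immediate, smooth compatibility follows by precomposing $\hat p$ with a product $g\times\mathrm{id}_X$, and the sheaf condition is transferred from the sheaf condition on $Y$ by localizing an arbitrary plot $(q_1,q_2):W\to U\times X$ along the open cover $\{q_1^{-1}(U_i)\}$ afforded by continuity of $q_1$. I then check that \textsc{ev} is smooth with respect to $\calD^*$: given a plot $(p,q):W\to C^\infty(X,Y)\times X$ with $p\in\calD^*$ and $q$ a plot of $X$, one computes $\mbox{\textsc{ev}}\circ(p,q)(w)=\hat p(w,q(w))$, which is the composition of the plot $w\mapsto(w,q(w))$ of $U\times X$ with the smooth map $\hat p$. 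Therefore $\calD^*$ is a functional diffeology, so it is contained in the coarsest such, namely the functional diffeology itself, yielding the desired implication.

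I expect the main bookkeeping step to be verifying the sheaf axiom for $\calD^*$: smoothness of $\hat p$ is not literally a local condition on $U\times X$, so one must carefully localize on the domain $W$ of an incoming plot using the continuity of the first coordinate, then invoke the sheaf axiom on $Y$. Everything else is a fairly direct unwinding of the definitions of the product and functional diffeologies.
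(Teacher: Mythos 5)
Your argument is correct, but there is nothing in the paper to compare it against: the paper states this proposition as an imported result, citing Article 1.57 of Iglesias-Zemmour's book, and gives no proof of its own. What you have written is essentially the standard textbook derivation: the forward implication via the factorization $\hat p=\mbox{\textsc{ev}}\circ(p\times\mathrm{id}_X)$ together with smoothness of $p\times\mathrm{id}_X$ for the product diffeology, and the converse by exhibiting the collection $\calD^*=\{p:\hat p\mbox{ smooth}\}$ as a diffeology for which \textsc{ev} is smooth. You correctly isolate the only delicate verification, namely the sheaf axiom for $\calD^*$, which must be checked by pulling an open cover of $U$ back along the first coordinate of an incoming plot of $U\times X$ and then invoking the sheaf axiom of $Y$. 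One small bonus of your two directions taken together, worth noting: the forward argument in fact shows that \emph{any} diffeology on $C^{\infty}(X,Y)$ making \textsc{ev} smooth is contained in $\calD^*$, so your proof simultaneously establishes that the coarsest functional diffeology exists and equals $\calD^*$ --- a point the paper simply takes for granted when it speaks of ``the coarsest'' functional diffeology.
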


\paragraph{Diffeological groups} A \textbf{diffeological group} is a
group $G$ equipped with a compatible diffeology, that is, such that
the multiplication and the inversion are smooth:
$$[(g,g')\mapsto gg']\in C^{\infty}(G\times G,G)\mbox{ and }[g\mapsto g^{-1}]\in C^{\infty}(G,G).$$
Thus, it mimicks the usual notions of a topological group and a Lie
group: it is both a group and a diffeological space such that the
group operations are maps (arrows) in the category of diffeological
spaces.

\paragraph{Functional diffeology on diffeomorphisms} Groups of
diffeomorphisms of diffeological spaces being the main examples
known of diffeological groups, and being precisely the kind of
object which we study below, we shall comment on their functional
diffeology. Let $X$ be a diffeological space, and let
$\mbox{Diff}(X)$ be the group of diffeomorphisms of $X$. As
described in the previous paragraph, $\mbox{Diff}(X)$, as well as
any of its subgroups, inherits the functional diffeology of
$C^{\infty}(X,X)$. On the other hand, there is the standard
diffeological group structure on $\mbox{Diff}(X)$ (or its subgroup),
which is the coarsest \emph{group} diffeology such that the
evaluation map is smooth. Note that, as observed in Section 1.61 of
\cite{iglesiasBook}, this diffeological group structure is in
general finer than the functional diffeology (therefore making a
comparison between the two will be part of our task in what
follows).

\paragraph{The D-topology} There is a ``canonical'' topology underlying each
diffeological structure; it is defined as follows:

\begin{defn} \emph{(\cite{iglesiasBook})} Given a diffeological
space $X$, the final topology induced by its plots, where each
domain is equipped with the standard topology, is called the
\textbf{D-topology} on $X$.
\end{defn}

To be more explicit, if $(X,\calD_X)$ is a diffeological space then
a subset $A$ of $X$ is open in the D-topology of $X$ if and only if
$p^{-1}(A)$ is open for each $p\in\calD_X$; we call such subsets
\textbf{D-open}. Note that if $\calD_X$ is generated by some
$\calD'$ then $A$ is D-open if and only if $p^{-1}(A)$ is open for
each $p\in\calD'$.

A smooth map $X\to X'$ is continuous if $X$ and $X'$ are equipped
with D-topology (hence there is an associated functor from the
category of diffeological spaces to the category of topological
spaces). As an important example, it is easy to see that the
D-topology on a smooth manifold with the standard diffeology
coincides with the usual topology on the manifold; in fact, this is
frequently the case even for non-standard diffeologies. That is due
to the fact that, as established in \cite{CSW_Dtopology}, the
D-topology is completely determined smooth curves. More precisely,
the following statement was proven in \cite{CSW_Dtopology}:

\begin{thm} \emph{(Theorem 3.7 of \cite{CSW_Dtopology})} The
D-topology on a diffeological space $X$ is determined by
$C^{\infty}(\matR,X)$, in the sense that a subset $A$ of $X$ is
D-open if and only if $p^{-1}(A)$ is open for every $p\in
C^{\infty}(\matR,X)$.
\end{thm}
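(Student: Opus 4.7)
The plan is as follows. One direction, namely that $A$ being D-open implies $p^{-1}(A)$ open for every $p\in C^{\infty}(\matR,X)$, is immediate from the very definition of the D-topology, since every smooth curve $\matR\to X$ is in particular a plot of $X$. So the entire content is in the converse.

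For the converse, I would assume that $p^{-1}(A)$ is open in $\matR$ for every $p\in C^{\infty}(\matR,X)$, and show by contradiction that $q^{-1}(A)$ is open in $U$ for every plot $q\colon U\to X$ with $U\subseteq\matR^n$ open. If this fails for some $q$, there exist a point $u_0\in q^{-1}(A)$ and a sequence $(u_k)_{k\in\matN}$ in $U\setminus q^{-1}(A)$ with $u_k\to u_0$. The strategy is then to exhibit a smooth curve $c\colon\matR\to U$ such that $c(0)=u_0$ and $c(t_k)=u_{n_k}$ for some subsequence $(u_{n_k})$ and some parameters $t_k\to 0$. Granted such a $c$, the composition $p:=q\circ c$ lies in $C^{\infty}(\matR,X)$ by the smooth compatibility axiom, so the hypothesis forces $p^{-1}(A)=c^{-1}(q^{-1}(A))$ to be open in $\matR$; but this set contains $0$ and misses the points $t_k\to 0$, a contradiction.

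The principal obstacle is the construction of the interpolating curve $c$. The required ingredient is a classical smoothing/interpolation lemma in $\matR^n$: for any sequence $u_k\to u_0$, after passing to a subsequence one can find a smooth curve $\matR\to\matR^n$ that takes the value $u_0$ at $t=0$ and the value $u_k$ at a chosen $t_k\downarrow 0$. The usual construction (compare Kriegl and Michor, \emph{The Convenient Setting of Global Analysis}) thins out the sequence so that the $t_k$ decrease fast enough, and glues together affine segments joining consecutive $u_k$'s after multiplying by bump functions that are flat to infinite order at the junctions $t_k$; the rapid decay of $t_k$ and of $\|u_k-u_0\|$ ensures that all derivatives converge to $0$ as $t\to 0$, so the resulting curve is smooth at $0$ as well.

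Finally, one must arrange that $c$ takes values in the open set $U$ rather than in all of $\matR^n$. Since $u_k\to u_0\in U$ and $U$ is open, once $|t|$ is sufficiently small the curve constructed above automatically lies in $U$; outside a small neighbourhood of $0$ one simply multiplies the displacement $c(t)-u_0$ by a smooth cutoff supported in a ball around $0$ that is mapped into $U$ by the translation $v\mapsto u_0+v$. This modification does not affect the values $c(0)$ and $c(t_k)$ for $k$ large, so the contradiction argument above goes through unchanged, completing the proof.
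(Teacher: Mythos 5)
The paper gives no proof of this statement --- it is quoted as Theorem 3.7 of \cite{CSW_Dtopology} --- and your argument is, in substance, exactly the proof given in that reference: the forward direction is immediate, and the converse is their reduction to the Kriegl--Michor special curve lemma, producing a smooth curve $c:\matR\to U$ with $c(0)=u_0$ and $c(t_k)=u_{n_k}$ for a fast-converging subsequence and $t_k\downarrow 0$, so that $q\circ c\in C^{\infty}(\matR,X)$ yields the contradiction. Your treatment of the one genuine technical point --- forcing $c$ to take values in $U$ by cutting off the displacement near $0$ and discarding finitely many terms --- is the standard fix, so the proposal is correct.
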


\section{Regular trees and subgroups of $\mbox{Aut
T}$}\label{defn-trees-sect}

As already mentioned, we will consider regular rooted trees of
valence $p$; this implies that there is a \emph{root}, of valence
$p$, and all the other vertices have valence $p+1$; such a tree is
naturaly decomposed into \emph{levels}, sets of vertices of equal
distance from the root (this distance being an integer equal to the
number of edges in the shortest path connecting the root to the
vertex in question). Below we give precise definitions of these
concepts and others that we will need.

\paragraph{Regular rooted trees} A regular 1-rooted tree, the simplest
example of which is shown in Fig. \ref{tree:fig}, is naturally
identified with the set of all words in a given finite alphabet $A$
of appropriate cardinality $p$.
\begin{figure}
    \begin{center}
    \includegraphics[scale=0.47]{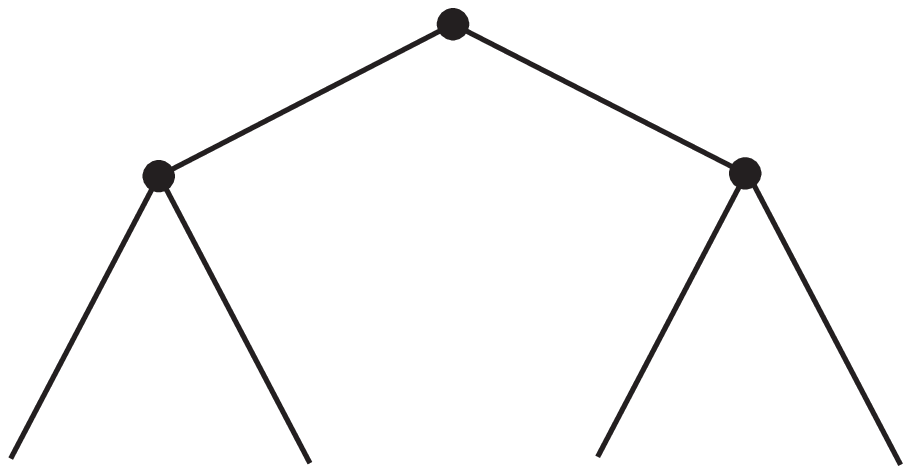}
\vspace{1cm}
    \mycap{An example of a regular 1-rooted tree; in this case $p=2$. The
    figure shows the root and the two vertices of the first level, with all
    the edges incident to them.}
    \label{tree:fig}
    \end{center}
    \end{figure}
Under this identification, the words correspond to vertices, the
root is the empty word, and two vertices are joined by an edge if
and only if they have the form $a_1a_2\ldots a_n$ and $a_1a_2\ldots
a_na_{n+1}$ for some $n$ and some $a_i\in A$. The number $n$ is
called the $\textbf{length}$ of a vertex $u=a_1a_2\ldots a_n$ and is
denoted by $|u|$. The set of all vertices of length $n$ is called
the $n$th \textbf{level} of $T$.

Suppose that $u=\hat{a}_1\hat{a}_2\ldots\hat{a}_n$ is a vertex. The
set of all vertices of the form
$$\hat{a}_1\hat{a}_2\ldots\hat{a}_na_{n+1}a_{n+2}\ldots a_{n+m},$$
where $m\in\matN$ and $a_{n+i}$ range over the set $A$, forms a
subtree of $T$; we will denote this subtree by $T_u$. It is easy to
see that $T_u$ is naturally isomorphic to the same tree $T$ via the
map
$$\hat{a}_1\hat{a}_2\ldots\hat{a}_na_{n+1}a_{n+2}\ldots a_{n+m}\mapsto a_{n+1}a_{n+2}\ldots a_{n+m}.$$
This map allows to identify subtrees $T_u$ for all vertices $u$,
with one fixed tree $T$.

\paragraph{Their automorphism groups} Let $T$ be a tree as above; an
\textbf{automorphism} of $T$ is a bijective map $f$ which fixes the
root and preserves the adjacency of vertices. The set of all
possible automorphisms of $T$ is obviously a group which we denote
by $\Aut T$; note that it is a profinite group\footnote{More
precisely, it is a pro-$p$-group.} (see also below).

\paragraph{Vertex stabilizers and congruence subgroups} Consider now
an arbitrary subgroup $G$ of $\Aut T$ and a vertex $v$ of $T$. The
\textbf{stabilizer} of $v$ in $G$ is the subgroup
$$\stab_G(v)=\{g\in G\,|\,v^g=v\}.$$
Now, if we consider the set of all vertices of level $n$, the
subgroup $\cap_{|v|=n}\stab_G(v)$ is called the ($n$th)
\textbf{level stabilizer} and is denoted by $\stab_G(n)$.

The subgroups $\stab_G(n)$ are also called \textbf{principal
congruence subgroups} in $G$. A subgroup of $G$ which contains a
principal congruence subgroup is in turn called a \textbf{congruence
subgroup}.

\paragraph{Rigid stabilizers} Let once again $G\leqslant\Aut T$ and
$v\in T$ a vertex. The \textbf{rigid stabilizer} of $v$ in $G$ is
the subgroup
$$\rist_G(v)=\{g\in G\,|\,u^g=u\mbox{ for all }u\in T\setminus T_v\}.$$
We also denote by $\rist_G(n)$ the subgroup
$\prod_{|v|=n}\rist_G(v)$; note that this is a normal subgroup of
$G$ (unlike the rigid stabilizer of just one vertex).

\paragraph{Recursive presentation of the action of $\Aut T$} It is
easy to see that $\Aut T$ possesses a sort of ``recurrent''
structure, that we now describe, as it is extremely useful for
working with $\Aut T$ (and its subgroups). Observe that $\Aut T$
admits a natural map $\phi:\Aut T\to\Aut T\wr\Sym(A)$, where
$\Sym(A)$ is the group of all permutations of elements of $A$. Thus,
every element $x$ of $\Aut T$ is given by an element
$$f_x\in\underbrace{\Aut T\times\ldots\times\Aut T}_{|A|}$$
and a permutation $\pi_x\in\Sym(A)$. The latter permutation is
called the \textbf{accompanying permutation}, or the
\textbf{activity}, of $x$ at the root. We write that
$$\phi(x)=f_x\cdot\pi_x.$$

In particular, the restriction of $\phi$ onto $\stab_{\Aut T}(1)$ is
an embedding (actually, an isomorphism) of $\stab_{\Aut T}(1)$ into
(with) the direct product of $|A|$ copies of $\Aut T$; we will
denote this restriction by $\Phi_1$. Furthermore, it is easy to see
that
$$\Phi_1(\stab_{\Aut T}(2))=\underbrace{\stab_{\Aut T}(1)\times\ldots\times\stab_{\Aut T}(1)}_{|A|};$$
therefore we can obtain the isomorphism
$$\Phi_2=(\underbrace{\Phi_1\times\ldots\times\Phi_1}_{|A|})\circ\Phi_1:\stab_{\Aut T}(2)\to
\underbrace{\Aut T\times\ldots\Aut T}_{|A|^2}.$$ Proceeding in this
manner, we define for each positive integer $n$ the isomorphism
$$\Phi_n=(\underbrace{\Phi_{n-1}\times\ldots\times\Phi_{n-1}}_{|A|})\circ\Phi_1:\stab_{\Aut T}(n)\to
\underbrace{\Aut T\times\ldots\Aut T}_{|A|^n}.$$

\paragraph{Profinite topology and congruence topology} Let $G\leqslant\Aut
T$; the \textbf{profinite topology} on $G$ is the topology generated
by all its finite-index subgroups taken as the system of
neighbourhoods of unity. To define the \textbf{congruence topology},
we take the set of all principal congruence subgroups (\emph{i.e.},
the level stabilizers) as the system of neighbourhoods of unity.
These two topologies frequently coincide (as it happens for the
first of the examples described below) but sometimes they do not (as
is the case for the second of the examples that follow).

\paragraph{Examples} Simple examples of the groups described above
can be found in \cite{GrigFoundation} or \cite{mioJalg} (a different
sort). We do not describe them, since we will not need them.

\section{A regular tree as a diffeological space}\label{diffeology-T-sect}

In this section we endow each regular tree $T$ with a diffeology.
The condition imperative in making the choice of such is that the
corresponding D-topology coincide with the usual one.

\subsection{General considerations}

A regular rooted tree, such as the ones we are considering, is not
naturally a smooth object, and a choice of diffeology with which to
endow it, represents its own issue. Although there exist other
options, the one we prefer is a certain analogue of the so-called
\emph{wire diffeology}. The latter was introduced by J.M. Souriau as
a diffeology on $\matR^n$ alternative to the standard one; it is the
diffeology generated by the set $C^{\infty}(\matR,\matR^n)$, the set
of the usual smooth maps $\matR\to\matR^n$ (thus, its plots are
characterized as those maps that locally factor through the smooth
maps $\matR\to\matR^n$). For $n\geqslant 2$ this diffeology is
different from the standard one (see \cite{iglesiasBook}, Sect.
1.10), although the underlying D-topology is the same (see
\cite{CSW_Dtopology}).

Of course, when we want to carry this notion over to one of our
regular trees, the first question to consider is, \emph{which maps
take the place of smooth ones?} We speak about this in detail later
on, but in brief, the main points are: the set of all maps $\matR\to
T$ would produce a very, and perhaps unreasonably, large diffeology,
the set of all continuous maps still gives a very large one (see
below for the curious observation of how the Peano curve enters the
picture in this respect), and so it seems reasonable to settle for
the set of all embeddings $\matR\hookrightarrow T$ as the generating
set for the wire diffeology on $T$.

\subsection{The wire diffeology on $T$}

As has already been mentioned, such diffeology is the one generated
by some subset of the set of all maps $\{\matR\to T\}$; the question
is, which subset? The following easy considerations suggest to
discard the ``extreme'' possibilities, more specifically: the
coarsest of such diffeologies is the one consisting of \emph{all}
maps $\matR\to T$, whereas the finest one is the discrete
diffeology, \emph{i.e.} the one generated by all constant maps
$\matR\to T$. Neither of the two is very interesting (as is
generally the case), and neither respects the structure of $T$ as a
topological space, something that we do want to take into account.

This latter consideration suggests to consider continuous maps only,
and our options become, to take \emph{all} continuous maps or only
some of them (such as, for instance, the injective ones, which is
what we will end up doing). We now illustrate that the diffeology
generated by the set of all continuous maps $\matR\to T$ (which for
the moment we will call the \emph{coarse wire diffeology}) is still
very large and, in some very informal sense, loses the
$1$-dimensional nature of $T$.

\paragraph{The coarse wire diffeology and the Peano curve} The above
statement that the just-mentioned coarse wire diffeology does not
truly respect the $1$-dimensional nature of our trees, can actually
be observed immediately from the famous example of the \emph{Peano
curve}, a continuous curve that fills the entire unit square.
Furthermore, after the appearance in 1890 of the ground-breaking
Peano's example, it became known that \emph{any} $\matR^n$ (with $n$
an arbitrary positive integer number) is the range of some
continuous curve; to be precise, for any $n=2,3,\ldots$ there exists
a continuous surjective map $s_n:\matR\to\matR^n$ (hence onto any
domain of $\matR^n$). Although none of these maps is invertible,
they do allow for a sort of immersion of \emph{any other} diffeology
into the coarsest wire diffeology, by assigning to a given plot
$p:\matR^n\supseteq U\to T$ the composition $p\circ t_U \circ s_n$
(where $t_U$ is some diffeomorphism $\matR^n\to U$, fixed for each
$U$). Although this assignment would not be one-to-one, it does give
an (intuitive, if nothing else) idea of how large the coarse wire
diffeology is.

\paragraph{The embedded wire diffeology on $T$} This is the
diffeology that we settle one; it is the diffeology generated by all
injective and continuous in both directions maps $\matR\to T$. It
depends on $T$ only, so we denote it by $\calD_T$. We furthermore
denote the generating set of $\calD_T$, the set $\{f:\matR\to T\,|\,
f\mbox{ is injective and both ways continuous}\}$, by
$\tilde{\calP}_T$.

The first thing that we would like to do is to restrict this
generating set as much as possible; indeed, if two maps,
$f_1,f_2:\matR\to T$, are such that $f_2=f_1\circ g$ for some
diffeomorphism $g:\matR\to\matR$ then (as it follows from the
definition of a generated diffeology) only one of them needs to
belong to the generating set. Therefore we denote by $\calP_T$ the
quotient of $\tilde{\calP}_T$ by the (right) action of the group of
diffeomorphisms of $\matR$; when it does not create confusion, by
one or more elements of $\calP_T$ we will mean a corresponding
collections of maps that are specific representatives of some
equivalence classes. The above observations then prove the
following:

\begin{claim}
The diffeology $\calD_T$ is generated by $\calP_T$.
\end{claim}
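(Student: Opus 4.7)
The plan is to prove the claim directly from the definition of a generated diffeology, showing the two inclusions of diffeologies separately.

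First I would fix, once and for all, a set-theoretic section of the quotient map $\tilde{\calP}_T \to \calP_T$, so that $\calP_T$ is realized as a subset of $\tilde{\calP}_T$ (the set of chosen representatives). With this identification, one inclusion is immediate: since $\calP_T \subseteq \tilde{\calP}_T$, the diffeology generated by $\calP_T$ is contained in the diffeology generated by $\tilde{\calP}_T$, which is by definition $\calD_T$.

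For the reverse inclusion, I would take an arbitrary plot $q: V \to T$ of $\calD_T$ and a point $v \in V$, and produce a local factorization through an element of $\calP_T$. By the description of the generated diffeology recalled earlier in the paper, there is a neighborhood $W \subseteq V$ of $v$ and a smooth map $\sigma: W \to \matR$ such that $q|_W = f \circ \sigma$ for some $f \in \tilde{\calP}_T$. Let $f_0 \in \calP_T$ be the fixed representative of the equivalence class of $f$; by construction there is a diffeomorphism $g: \matR \to \matR$ with $f = f_0 \circ g$. Then
\[
q|_W = f_0 \circ (g \circ \sigma),
\]
and since $g \circ \sigma: W \to \matR$ is smooth in the usual sense, this exhibits $q|_W$ as a local factorization through an element of $\calP_T$. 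Since $v$ was arbitrary, $q$ belongs to the diffeology generated by $\calP_T$.

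I do not expect any serious obstacle: the whole argument is essentially the observation (made right before the claim) that the generated diffeology is insensitive to reparametrization by diffeomorphisms of the domain, combined with an unraveling of the definition. The only mildly delicate point is keeping the book-keeping of representatives consistent, but this is handled by fixing the section at the start.
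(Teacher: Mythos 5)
Your argument is correct and is essentially the paper's own: the paper proves the claim only by appeal to the observation, stated just beforehand, that if $f_2=f_1\circ g$ for a diffeomorphism $g$ of $\matR$ then only one representative is needed in the generating set, and your write-up simply makes the two inclusions and the local factorization explicit. No divergence in approach and no gap.
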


\paragraph{The topology} We now proceed to showing that the diffeology
chosen does satisfy the condition that we wanted to, namely, that
the following is true.

\begin{thm}
The D-topology corresponding to $\calD_T$ is the usual topology of
$T$.
\end{thm}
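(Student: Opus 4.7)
The plan is to prove both inclusions between the D-topology $\tau_D$ induced by $\calD_T$ and the usual topology $\tau$ of $T$ separately. Since $\calD_T$ is generated by $\tilde\calP_T$ (equivalently by $\calP_T$), I shall use the fact noted earlier that D-openness can be tested on a generating family: $A\subseteq T$ is D-open if and only if $f^{-1}(A)$ is open in $\matR$ for every $f\in\tilde\calP_T$.

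For the inclusion $\tau\subseteq\tau_D$, it suffices to observe that every $f\in\tilde\calP_T$ is by definition a continuous map $\matR\to T$. Hence if $A$ is $\tau$-open, then $f^{-1}(A)$ is open for each generator, and $A$ is D-open.

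The substance of the theorem lies in the reverse inclusion. Given a D-open $A$ and an arbitrary point $x\in A$, my plan is to produce a $\tau$-open neighbourhood of $x$ contained in $A$ by splitting into two cases according to whether $x$ lies in the interior of an edge or at a vertex. In the edge case, when $x$ is an interior point of some edge $e$, I pick an injective, both-ways continuous map $f\colon\matR\to T$ with $f(0)=x$ whose image contains an open arc of $e$ around $x$, obtained by prolonging $e$ past each of its two endpoint-vertices by an infinite ray into an appropriate subtree. Because $f^{-1}(A)$ is open in $\matR$ and contains $0$, it contains some interval $(-\delta,\delta)$; its image is a $\tau$-open sub-arc of $e$ around $x$ lying in $A$. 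In the vertex case, when $x=v$ is a vertex, let $e_1,\ldots,e_k$ be the edges incident to $v$ (with $k\geqslant 2$, since $p\geqslant 2$). For each $i$ I select some $j\neq i$ and construct an embedding $f_i\in\tilde\calP_T$ which enters $v$ along $e_i$, meets $v$ at parameter $0$, and leaves $v$ along $e_j$. Applying D-openness to each $f_i$ at the parameter $0$ yields an initial sub-arc of $e_i$ contained in $A$; aggregating these arcs over all $i$, together with $v$ itself, gives the required $\tau$-open neighbourhood of $v$ inside $A$.

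The main technical obstacle is the construction of the embeddings $f$ and $f_i$ used above, i.e.\ the geometric assertion that for any vertex $v$ and any ordered pair of distinct incident edges there is an injective, both-ways continuous map $\matR\to T$ whose image contains initial arcs of both edges meeting at $v$. I would handle this by exploiting the regularity and infiniteness of $T$: each vertex admits an infinite descending ray in every one of its child subtrees (there are $p\geqslant 2$ such subtrees), so in the generic case one concatenates two such rays through $v$; if one of the chosen edges is the parent edge of $v$, one first walks up the finite ancestor-chain to the root and then descends along an infinite ray in a sibling subtree. A monotone bijection with $\matR$ parameterises the resulting bi-infinite embedded path and yields an element of $\tilde\calP_T$ with the desired properties.
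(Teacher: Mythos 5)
Your proposal is correct and follows essentially the same route as the paper: both directions are handled identically, with the forward inclusion coming from continuity of the generators and the reverse inclusion from a pointwise case analysis (interior of an edge versus vertex) using embeddings whose images contain the relevant edge or pair of incident edges. The only difference is that you additionally spell out the construction of the bi-infinite embedded paths through a given vertex, a detail the paper takes for granted.
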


\begin{proof}
Recall that by the definition of D-topology a set $X'\subset T$ is
D-open if and only if for any plot $p:U\to T$ the pre-image
$p^{-1}(X)$ is open in $U$; now, by construction and by the
definition of the generated diffeology this is equivalent to
$\gamma^{-1}(X)$ being open in $\matR$ for any $\gamma\in\calP_T$.

We need to show that $X'$ is D-open if and only if it is open in $T$
in the usual sense. Suppose first that $X'$ is open. Then its
pre-image with respect to any $\gamma$ is open in $\matR$ because
$\gamma$ is continuous; therefore it is D-open by the very
definition of D-openness.

Now suppose that $X'$ is a D-open set; we need to show that it is
also open in the usual sense. To do so, it is sufficient to show
that for any point of $X'$ the latter contains its open
neighbourhood. Choose such an arbitrary point $x\in X'$; we consider
two cases.

Suppose first that $x$ belongs to the interior of some edge $e$. Set
$X=X'\cap\mbox{Int}(e)$, and let $\gamma\in\calP_T$; we can assume
that its image contains $e$. Note that since $\gamma$ is injective,
we have
$\gamma^{-1}(X)=\gamma^{-1}(X')\cap\gamma^{-1}(\mbox{Int}(e))$; both
of these two sets are open in $\matR$, the first because $X'$ is
D-open and the second because it is the pre-image of an open set
under the continuous map $\gamma$. This implies that
$\gamma^{-1}(X)$ is open in $\matR$, therefore $X$ is open in the
image of $\gamma$, the latter being a homeomorphism with its image,
and it is open in $\mbox{Int}(e)$, hence it is open in $T$ as well.
Thus, $X$ is an open neighbourhood of $x$ contained in $X'$.

Suppose now that $x$ is a vertex (we can assume that it is not the
root; the proof changes only formally for the latter). Let
$e_1,\ldots,e_{p+1}$ be the edges incident to $x$. For each
$1\leqslant i<j\leqslant p+1$ let
$X_{i,j}=\mbox{Int}(e_i)\cup\mbox{Int}(e_j)\cup\{x\}$; set
$$X=\cup_{i,j}(X'\cap X_{i,j}).$$

We need to show that $X$ is open in $T$. For each $i,j$ choose a map
$\gamma_{i,j}$ such that its image contains $e_i\cup e_j$. Then
$\gamma_{i,j}^{-1}(X'\cap
X_{i,j})=\gamma_{i,j}^{-1}(X')\cap\gamma_{i,j}^{-1}(X_{i,j})$; both
of these sets are open in $\matR$, by the D-openness of $X'$ and by
continuity of $\gamma$. Hence $\gamma_{i,j}^{-1}(X'\cap X_{i,j})$ is
open in $\matR$ and, $\gamma_{i,j}$ being a homeomorphism with its
image, the set $X'\cap X_{i,j}$ is open in $e_i\cup e_j$. It follows
that $X$ is open in $e_1\cup\ldots\cup e_{p+1}$ and therefore it is
open in $T$; thus, it is an open nieghbourhood of $x$ contained in
$X'$, and this concludes the proof.
\end{proof}

\section{$\Aut T$ as a diffeological group}

In this section we consider $T$ endowed with the embedded wire
diffeology described in the previous section. We must first ensure
that the elements of $\Aut T$ are smooth maps with respect to this
diffeology; this then gives rise to the functional diffeology on
$\Aut T$ and to the \emph{a priori} finer diffeology that makes
$\Aut T$ into a diffeological group and is the finest one with such
property.

\subsection{The functional diffeology on $\Aut T$}

In this section we first make some observations regarding the plots
of the functional diffeology on $\Aut T$; as a preliminary, we need
to show that such diffeology is indeed well-defined, \emph{i.e.},
that the elements of $\Aut T$ are indeed diffeomorphisms. We then
proceed to consider the D-topology underlying the functional
diffeology of $\Aut T$.

\paragraph{Automorphisms as diffeomorphisms} The following statement
follows easily from the choice of diffeology on $T$.

\begin{prop}
Let $g\in\Aut T$. Then $g:T\to T$ is a smooth map with respect to
the diffeology $\calD_T$.
\end{prop}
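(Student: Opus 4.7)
The plan is to use the fact that $\calD_T$ is the diffeology generated by $\calP_T$, so that smoothness of $g:T\to T$ reduces to a single check on the generating set. Concretely, I would invoke the standard fact (immediate from the description of a generated diffeology recalled in Section 1) that a map $f:X\to Y$ out of a diffeological space whose diffeology is generated by a family $\calP$ is smooth if and only if $f\circ\gamma$ is a plot of $Y$ for every $\gamma\in\calP$. Thus, combined with Claim~3.1, it suffices to show that $g\circ\gamma\in\tilde\calP_T$ for every $\gamma\in\tilde\calP_T$, since any such composition automatically determines an element of $\calP_T=\tilde\calP_T/\Diff(\matR)$.

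The first step is therefore to observe that any $g\in\Aut T$ induces a homeomorphism of $T$ in its usual topology. This is essentially the statement that a combinatorial automorphism of the graph extends uniquely to a homeomorphism of the geometric realization: $g$ permutes the vertices while preserving adjacency, and we extend it affinely on each edge, so that each edge is mapped bijectively and bicontinuously onto its image edge. Gluing these edge homeomorphisms along the vertex set yields a continuous bijection $T\to T$ whose inverse, induced by $g^{-1}\in\Aut T$, is continuous by the same argument; hence $g$ is a self-homeomorphism of $T$.

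The second step is then immediate: given $\gamma\in\tilde\calP_T$, that is, an injective and bi-continuous map $\matR\to T$, the composition $g\circ\gamma:\matR\to T$ is injective (composition of injections) and bi-continuous (composition of homeomorphisms onto images, using the first step). Therefore $g\circ\gamma\in\tilde\calP_T$, so it represents an element of $\calP_T$ and in particular is a plot of $\calD_T$. By the reduction of the first paragraph, $g$ is smooth.

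There is really no serious obstacle; the only point that requires a moment's care is the passage from $g$ as a graph automorphism (permuting vertices) to $g$ as a self-map of the topological space $T$ (including interiors of edges), and the verification that this extension is a homeomorphism. Once this is in place, everything else is a formal consequence of the fact that we chose $\calD_T$ to be generated by a family of maps that is manifestly stable under post-composition by homeomorphisms of $T$.
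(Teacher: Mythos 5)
Your proof is correct and follows essentially the same route as the paper: reduce smoothness to the generating set $\tilde\calP_T$, observe that $g$ is a self-homeomorphism of $T$ in its usual topology, and conclude that $g\circ\gamma$ is again injective and bi-continuous. The only difference is that you spell out the extension of the combinatorial automorphism to a homeomorphism of the geometric realization, a point the paper takes for granted.
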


\begin{proof}
By definition of a generated diffeology and that of a smooth map it
is sufficient to show that for any given injective and both ways
continuous map $\gamma:\matR\to T$ the composition $g\circ\gamma$ is
again injective and both ways continuous. This follows from the fact
that $g$ is an automorphism of $T$, \emph{i.e.}, it is a
homeomorphism of $T$ considered with its usual topology; as we have
already established that the D-topology of $T$ coincides with the
usual one, this proves the claim.
\end{proof}

\paragraph{A special family of plots of $T$} In the arguments that follow,
we will make use of the following family of plots of $T$. Let
$\hat{\gamma}$ be an infinite path in $T$; let $v_0\in\hat{\gamma}$
be the vertex of the smallest length. For each $\hat{\gamma}$ we fix
a homeomorphism $\gamma:\matR\to\hat{\gamma}\subset T$ such that
$$\gamma(0)=v_0,\mbox{and for any }n\in\matZ\,\,\,\,\gamma(n)\mbox{ is a vertex of length }|v_0|+n.$$
Obviously, every $\gamma$ is a plot for the diffeology $\calD_T$. We
denote the set of maps $\gamma$, associated to all possible
$\hat{\gamma}\subset T$, by
$$\Gamma(T)=\{\gamma\,|\;\hat{\gamma}\mbox{ an infinite path in }T\}.$$
We denote by $\Gamma_0(T)$ the subset of $\Gamma(T)$ consisting of
all those maps whose image contains the root.

For technical reasons we wish to stress that all maps
$\gamma\in\Gamma(T)$ possess, by construction, the following
properties:
\begin{itemize}
  \item for any given $x\in\matR$, its image $\gamma(x)$ is a vertex
  if and only if $x\in\matZ$;
  \item in particular, the restriction of $\gamma$ on any interval of form
  $(n,n+1)$ is a homeomorphism with the interior of some edge of
  $T$.
\end{itemize}

\paragraph{Smooth curves in $\Aut T$} Since the D-topology is
defined by smooth curves (as mentioned in the first section, see
\cite{CSW_Dtopology}), we first establish the following
characterization of those plots of the functional diffeology on
$\Aut T$ that are curves.

\begin{prop}
Let $p:\matR\to\Aut T$ be a plot for the functional diffeology on
$\Aut T$. Then for all $m,n\in\matN$ the automorphisms $p(n)$,
$p(n+1)$ belong to the same coset of $\stab(n)$, and the
automorphisms $p(-m)$, $p(-m-1)$ belong to the same coset of
$\stab(m)$.
\end{prop}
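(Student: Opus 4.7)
The plan is to apply Proposition \ref{criterio-funct-diff}, according to which $p$ is a plot of the functional diffeology on $\Aut T$ if and only if the adjoint map $F:\matR\times T\to T$, $F(u,x)=p(u)(x)$, is smooth. For each vertex $v\in T$, the assignment $u\mapsto(u,v)$ is a plot of the product diffeology on $\matR\times T$ (identity in the first coordinate, constant in the second), so composing with $F$ produces a plot $q_v:\matR\to T$ of $\calD_T$, namely $q_v(u)=p(u)(v)$. Since each $p(u)$ is a tree automorphism, $q_v$ takes its values entirely in the vertex set of $T$ (indeed in vertices of the same level as $v$).

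The technical heart of the argument is then the assertion that any plot $q\in\calD_T$ with image contained in the vertex set must be constant. By the description of the generated diffeology $\calD_T$, for any $u_0\in\matR$ there is a connected neighbourhood $V\ni u_0$ on which $q|_V=\gamma\circ h$ for some $\gamma\in\tilde\calP_T$ and a smooth $h:V\to\matR$. The vertex set of $T$ is discrete in its usual topology, which by the main theorem of Section \ref{diffeology-T-sect} is the D-topology of $\calD_T$; since $\gamma$ is a homeomorphism onto its image, $\gamma^{-1}(\text{vertices})$ is a discrete subset of $\matR$. Hence $h$ is continuous from the connected $V$ into a discrete subset of $\matR$, which forces $h$, and therefore $q|_V$, to be constant. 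Thus $q$ is locally constant on the connected space $\matR$, and therefore constant.

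Applying this to each $q_v$ yields $p(u)(v)=p(0)(v)$ for every $u\in\matR$ and every vertex $v\in T$. In particular $p(n)$ and $p(n+1)$ agree on all vertices of $T$, so \emph{a fortiori} on the vertices of level at most $n$; this is precisely the assertion $p(n+1)p(n)^{-1}\in\stab(n)$, i.e., that $p(n)$ and $p(n+1)$ lie in the same coset of $\stab(n)$. The statement about $p(-m)$ and $p(-m-1)$ is obtained by the identical argument with $u=-m,-m-1$.

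The main obstacle I foresee is the ``vertex-valued plots are constant'' step: it is not a formal consequence of diffeological axioms but depends essentially on the specific choice of generators $\tilde\calP_T$ for $\calD_T$ and on the discreteness of the vertex set in the usual topology of $T$. Once this step is established, everything else is a matter of unwinding Proposition \ref{criterio-funct-diff}, the product diffeology on $\matR\times T$, and the definition of a generated diffeology.
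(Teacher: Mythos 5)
Your proof is correct, but it takes a genuinely different route from the paper's, and in fact it proves considerably more than the statement asks. The paper evaluates the smooth map $\varphi_p(u,t)=p(u)(t)$ along the ``diagonal'' plot $(\mathrm{Id},\gamma)$ for a path $\gamma\in\Gamma_0(T)$ parametrized so that $\gamma(n)$ is a vertex of length $n$; continuity of the resulting curve, plus the fact that the open interval $(n,n+1)$ maps into the interior of a single edge, shows that $p(n)(v)$ and $p(n+1)(v')$ are adjacent, and the uniqueness of the length-$n$ neighbour of a length-$(n+1)$ vertex then gives $p(n)(v)=p(n+1)(v)$ --- i.e.\ agreement exactly on level $n$, which is all the coset statement requires. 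You instead evaluate along $(\mathrm{Id},c_v)$ for a \emph{constant} $c_v$, observe that the resulting curve $u\mapsto p(u)(v)$ is a vertex-valued plot of $\calD_T$, and show that any such plot is constant (local factorization through some $\gamma\in\tilde{\calP}_T$ via a smooth $h$ landing in the discrete set $\gamma^{-1}(\mbox{vertices})$, hence $h$ locally constant). This is precisely the argument the paper reserves for its later Proposition \ref{D-top:discrete:prop}, which concludes that every such $p$ is constant outright; your proof therefore collapses the proposition, its corollary on convergence in the congruence topology, and Proposition \ref{D-top:discrete:prop} into a single step. What the paper's gentler argument buys is expository: it isolates the relation between smooth curves in $\Aut T$ and the congruence filtration $\{\stab(n)\}$ before revealing that the diffeology is in fact discrete, whereas your argument gets the stronger conclusion immediately but renders the intermediate statements vacuous (equal elements trivially lie in the same coset of every subgroup). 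The one point worth making explicit in your write-up is that a plot of a generated diffeology may also be \emph{locally constant} rather than factoring through a generator; that case only helps you, but it should be mentioned for completeness, as the paper does.
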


\begin{proof}
Recall that by Proposition \ref{criterio-funct-diff} $p$ is a plot
if and only if the map $\varphi_p:\matR\times T\to T$ given by
$\varphi_p(x,v)=p(x)(v)$ is smooth. The latter condition implies, in
particular, that for any smooth map $f:\matR\to\matR$ and for any
injective two ways continuous map $\gamma:\matR\to T$ the
composition $\varphi_p\circ(f,\gamma):\matR\to T$ is a plot of $T$,
\emph{i.e.}, that (at least locally) it is the composition
$\tilde{\gamma}\circ\tilde{f}$, of some smooth map
$\tilde{f}:\matR\to\matR$ and some injective two ways continuous
$\tilde{\gamma}:\matR\to T$. In particular, the map
$\varphi_p\circ(f,\gamma)$ is a continuous map in the usual sense.

Let us now fix a positive integer $n$, a vertex $v$ of length $n$,
and a vertex $v'$ of length $n+1$ adjacent to $v$. Let
$\gamma\in\Gamma_0(T)$ be  such that $\gamma(n)=v$ and
$\gamma(n+1)=v'$. By definition of $\varphi_p$ we have that
$$(\varphi_p\circ(\mbox{Id},\gamma))(n)=\varphi_p(n,\gamma(n))=p(n)(v),\mbox{ and}$$
$$(\varphi_p\circ(\mbox{Id},\gamma))(n+1)=\varphi_p(n+1,\gamma(n+1))=p(n+1)(v').$$

We claim that $p(n)(v)$ and $p(n+1)(v')$ are adjacent vertices. That
they are vertices, of which the first is has length $n$ and the
second one has length $n+1$, is obvious, since $p$ takes values in
$\Aut T$, all of whose elements send vertices to vertices preserving
their length. It suffices to show that they are adjacent,
\emph{i.e.}, joined by an edge. As we have already observed, the map
$\varphi_p\circ(\mbox{Id},\gamma)$ is continuous in the usual sense,
so it suffices to show that the image of the interval $(n,n+1)$
under it does not contain vertices. Indeed, by its definition
$\varphi_p\circ(\mbox{Id},\gamma)$ writes as
$(\varphi_p\circ(\mbox{Id},\gamma))(x)=p(x)(\gamma(x))$; we first
observe that this image is a vertex if and only if $\gamma(x)$ is a
vertex (this is because $p(x)\in\Aut T$), then, second, $\gamma(x)$
is a vertex if and only if $x\in\matZ$ (this is by choice of
$\gamma$). In particular, if $n<x<n+1$ then
$(\varphi_p\circ(\mbox{Id},\gamma))(x)=p(x)(\gamma(x))$ belongs to
the interior of some edge, and precisely, the edge that joins
$p(n)(v)$ and $p(n+1)(v')$.

It remains to observe that $p(n+1)(v')$ is adjacent to a unique
vertex of length $n$; since $v$ and $v'$ are adjacent, $v$ has
length $n$, and $p(n+1)$ is an automorphism, this vertex is
$p(n+1)(v)$. On the other hand, $p(n)(v)$ has length $n$, and we
have just shown that it is adjacent to $p(n+1)(v')$; we conclude
that
$$p(n)(v)=p(n+1)(v).$$
Finally, since $v$ is arbitrary, we can conclude that
$p(n)\stab(n)=p(n+1)\stab(n)$, as claimed; and since $n$ is
arbitrary, this proves the entire statement.
\end{proof}

We now can draw the following conclusion.

\begin{cor}
Each of the two sequences $\{p(n)\}$, $\{p(-m)\}$ is a converging
sequence for the congruence topology on $\Aut T$.
\end{cor}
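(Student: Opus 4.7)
The plan is to unpack the definition of convergence in the congruence topology and deduce the conclusion directly from the preceding proposition. By definition, a sequence $\{g_k\}\subset\Aut T$ converges to some $g\in\Aut T$ in the congruence topology if and only if for every $n\in\matN$ there exists $N=N(n)$ such that $g_k\in g\cdot\stab(n)$ for all $k\geqslant N$; equivalently, $g_k$ and $g$ act identically on the vertices of level at most $n$ for all sufficiently large $k$.

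First I would promote the one-step statement of the proposition to a statement about arbitrary distances. Given $k\geqslant n$, iterating the equality $p(j)\stab(j)=p(j+1)\stab(j)$ for $j=n,n+1,\ldots,k-1$, and using the inclusion $\stab(j)\subseteq\stab(n)$ for $j\geqslant n$, one obtains $p(k)\stab(n)=p(n)\stab(n)$ for every $k\geqslant n$. In other words, for each fixed $n$ the sequence $\{p(k)\}$ is eventually constant modulo $\stab(n)$ as soon as $k\geqslant n$. The same argument, applied in the opposite direction, handles $\{p(-m)\}$.

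Next I would construct the candidate limit $g\in\Aut T$ by gluing these compatible cosets. For any vertex $v$ of level $n$, set $g(v):=p(n)(v)$; this value does not depend on the choice of representative within $p(n)\stab(n)$, and level-by-level compatibility guarantees agreement with $p(m)(v)$ for every $m\geqslant n$. The resulting map $g$ is a bijection of the vertex set that fixes the root and preserves adjacency (inherited from each $p(n)$ on the levels where $g$ coincides with $p(n)$), hence $g\in\Aut T$; the analogous construction furnishes a limit for $\{p(-m)\}$.

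Convergence is then immediate: by construction $g\stab(n)=p(n)\stab(n)=p(k)\stab(n)$ for all $k\geqslant n$, which is precisely the defining condition for convergence in the congruence topology on $\Aut T$. I do not anticipate a genuine obstacle here, since all the real work was done in the preceding proposition; the corollary is simply the recognition that its conclusion is the Cauchy-type condition characterizing convergence in the profinite (congruence) topology, combined with the elementary observation that $\Aut T$, being the inverse limit of the finite groups $\Aut T/\stab(n)$, carries every such Cauchy sequence to a bona fide limit.
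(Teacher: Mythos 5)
Your proof is correct, and it arrives at the same key intermediate fact as the paper, namely that $p(k)\stab(n)=p(n)\stab(n)$ for all $k\geqslant n$, but by a genuinely different route. The paper proves this by going back into the geometry of the tree: it re-uses the internal steps of the preceding proposition's proof (fixing $\gamma\in\Gamma_0(T)$ through the vertex $v$ and tracking how $p(n+k)(\gamma(n+k))$ remains inside the subtree $T_{p(n)(v)}$), and then concludes by induction on $k$ using the fact that a vertex of length $n+k$ lies in a unique subtree $T_u$ with $|u|=n$. You instead treat the proposition as a black box and simply chain the one-step coset equalities $p(j)\stab(j)=p(j+1)\stab(j)$ through the inclusions $\stab(j)\subseteq\stab(n)$ for $j\geqslant n$; this is shorter, purely group-theoretic, and arguably the right level of abstraction for a corollary. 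You also do something the paper does not: you exhibit the limit automorphism $g$ explicitly by gluing the stabilized values level by level (checking well-definedness and preservation of adjacency), whereas the paper only verifies the Cauchy-type condition and leaves the existence of the limit implicit in the completeness of $\Aut T$ for the congruence topology. Both arguments are sound; yours buys modularity and an explicit limit, while the paper's stays closer to the tree combinatorics it has already set up.
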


Note that we phrase this statement in terms of the congruence
topology on $\Aut T$, and not in those of the profinite topology,
which for $\Aut T$ does coincide with the congruence one. This is to
highlight the relation of this statement for examples such as the
group $\Gamma$, for which the two topologies are different; although
we will see shortly a fact that renders the difference
insignificant.

\begin{proof}
It suffices to show that $p(n)\stab(n)=p(n+k)\stab(n)$ for all
$n,k\in\matN$; as in the previous proof, this is equivalent to
having $p(n)(v)=p(n+k)(v)$ for any vertex $v$ of length $n$. Choose
such a vertex, and fix a map $\gamma\in\Gamma_0(T)$ such that
$\gamma(n)=v$. As we have established in the proof of the previous
proposition, $p(n+1)(\gamma(n+1))$ belongs to the subtree
$T_{p(n)(v)}$. By the same reasoning, applied to $n+1$, the vertex
$p(n+2)(\gamma(n+2))$ belongs to the subtree
$T_{p(n+1)(\gamma(n+1))}\subset T_{p(n)(v)}$. Now, since
$\gamma(n+2)\in T_v$ by choice of $\gamma$, each vertex of length
$n+2$ belongs to a unique subtree $T_u$ with $|u|=n$, and $p(n+2)$
is an automorphism, we can conclude that $p(n+2)(v)=p(n)(v)$, and
the corollary follows by the now obvious induction on $k$.
\end{proof}

The above Corollary actually paves the way to the following
statement of crucial consequences:

\begin{prop}\label{D-top:discrete:prop}
Let $p:\matR\to\Aut T$ be a plot for the functional diffeology. Then
$p$ is a constant map.
\end{prop}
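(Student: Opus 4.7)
The plan is to fix a vertex $w$ of $T$ and show that the orbit map $\psi_w:\matR\to T$ defined by $\psi_w(x)=p(x)(w)$ must be constant. Since an element of $\Aut T$ is determined by its action on the vertex set, this will immediately give $p(x)=p(0)$ for every $x\in\matR$ and hence prove the proposition.

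To set up $\psi_w$ as a plot of $T$, I would invoke Proposition \ref{criterio-funct-diff}: the hypothesis that $p$ is a plot of the functional diffeology is equivalent to smoothness of the associated map $\varphi_p:\matR\times T\to T$, $(x,v)\mapsto p(x)(v)$. The map $\matR\to\matR\times T$ given by $x\mapsto(x,w)$ is a plot of the product diffeology, because its two coordinate projections are, respectively, the identity of $\matR$ and a constant map into $T$. Composing with the smooth map $\varphi_p$ then shows that $\psi_w$ is a plot of $\calD_T$.

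The decisive observation is that $\psi_w$ takes values in the vertex set $V\subset T$, because every $p(x)\in\Aut T$ sends vertices to vertices. By the theorem of Section \ref{diffeology-T-sect} the D-topology on $T$ coincides with its usual topology, so the plot $\psi_w$ is in particular continuous as a map $\matR\to T$ in the usual sense. But $V$, endowed with the subspace topology from $T$, is \emph{discrete}: each vertex admits an open cone neighbourhood in $T$ meeting $V$ only at that vertex. Since $\matR$ is connected, a continuous map from $\matR$ into a discrete space must be constant, so $\psi_w(x)=\psi_w(0)$ for every $x$. Letting $w$ range over all vertices yields $p(x)=p(0)$ identically on the vertex set, and hence as elements of $\Aut T$. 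The only step that requires real care is the first, namely recognising $\psi_w$ as a plot of $T$ via the product diffeology together with the criterion of Proposition \ref{criterio-funct-diff}; once that is in place, the combination of the D-topology identification from the previous section with the discreteness of the vertex subspace forces the conclusion essentially for free, and in particular does not rely on the preceding corollary about integer arguments.
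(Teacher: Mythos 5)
Your proposal is correct and follows essentially the same route as the paper: fix a vertex, compose $\varphi_p$ with the plot $x\mapsto(x,v)$ to get a plot of $T$ landing in the discrete vertex set, and use connectedness of $\matR$ to force constancy. The only cosmetic difference is that you justify continuity of this composite via the D-topology theorem of Section \ref{diffeology-T-sect}, whereas the paper reads it off directly from the description of plots of the generated diffeology; both are valid.
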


\begin{proof}
Recall that by Proposition \ref{criterio-funct-diff} $p$ is a plot
if and only if the map $\varphi:\matR\times T\to T$ given by
$\varphi(u,t)=p(u)(t)$ is smooth. Now, by definition $\varphi$ is
smooth if and only if for any plot $p':U\to\matR\times T$ the
composition $\varphi\circ p'$ is again a plot of $T$.

Let us fix and arbitrary vertex $v$ of $T$, and let us take, as the
plot $p'$, the following map:
$(\mbox{Id}_{\matR},c_v):\matR\to\matR\times T$, where $c_v:\matR\to
T$ is the constant map acting by $c_v(x)\equiv v$. Then
$$(\varphi\circ(\mbox{Id}_{\matR},c_v))(x)=p(x)(v).$$
Observe now that $p(x)$ is an automorphism of $T$ for all $x$, and
so sends vertices to vertices; therefore the image of the map
$\varphi\circ(\mbox{Id}_{\matR},c_v)$ is a set of vertices of $T$.
In particular, it is a discrete subset of $T$.

On the other hand, $\varphi\circ(\mbox{Id}_{\matR},c_v)$ is a plot
of $T$; as such, it is either a constant map or it filters through
an injective continuous map $\matR\to T$ via a smooth map
$\matR\to\matR$. In this latter case it must a continuous map
defined on a connected set and so cannot have a discrete set with
more than one point as its image. It remains to conclude that
$\varphi\circ(\mbox{Id}_{\matR},c_v)$ is a constant map, which means
that $p(x)(v)$ does not depend on $x$. Since $v$ is arbitrary, this
implies that $p$ is a constant map, as is claimed.
\end{proof}

The meaning of this Proposition is that the only smooth curves in
$\Aut T$ are the constant ones; this has far-reaching consequences,
as we immediately see.

\paragraph{The D-topology of $\Aut T$} From what is established in
the previous paragraph we easily draw the following conclusion.

\begin{thm}
The D-topology underlying the functional diffeology on $\Aut T$ is
the discrete topology.
\end{thm}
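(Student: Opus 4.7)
The plan is to combine Proposition \ref{D-top:discrete:prop}, which asserts that every plot $\matR\to\Aut T$ for the functional diffeology is constant, with the earlier Theorem 3.7 of \cite{CSW_Dtopology}, which states that the D-topology on any diffeological space is completely determined by its smooth curves: a subset is D-open if and only if its preimage under every element of $C^{\infty}(\matR,X)$ is open in $\matR$.

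Concretely, I would first fix an arbitrary subset $A\subseteq\Aut T$ and check that it is D-open. Let $p:\matR\to\Aut T$ be any plot for the functional diffeology. By Proposition \ref{D-top:discrete:prop}, $p$ is constant, so $p\equiv g$ for some $g\in\Aut T$. Then $p^{-1}(A)$ equals $\matR$ if $g\in A$ and equals $\emptyset$ otherwise; in either case it is open in $\matR$. Applying the cited theorem, $A$ is D-open. Since $A$ was arbitrary, every subset of $\Aut T$ is D-open, which is precisely the statement that the D-topology is discrete.

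The argument is essentially a one-step deduction, so there is no real obstacle; all the work has already been done in Proposition \ref{D-top:discrete:prop}. The only thing worth being careful about is invoking Theorem 3.7 of \cite{CSW_Dtopology} correctly, namely using the fact that the D-topology is detected by smooth curves rather than by arbitrary plots — otherwise one would need to argue directly about higher-dimensional plots $U\to\Aut T$, which is unnecessary given that we already have the curve-level information.
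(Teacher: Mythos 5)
Your argument is correct and is exactly the paper's proof: the paper also deduces the theorem by combining Proposition \ref{D-top:discrete:prop} with Theorem 3.7 of \cite{CSW_Dtopology}, and you have merely spelled out the (trivial) verification that preimages of arbitrary subsets under constant curves are open. No issues.
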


\begin{proof}
This follows from the Proposition above and Theorem 3.7 of
\cite{CSW_Dtopology} (see also Example 3.2(2) therein).
\end{proof}

\paragraph{The functional diffeology of $\Aut T$ is discrete}
Moreover, we consider the proof of Proposition
\ref{D-top:discrete:prop}, we see that the plot $p$ under
consideration being defined on $\matR$ rather than an arbitrary
domain $U\subseteq\matR^k$ was not significant; it would hold just
the same writing $U$ in place of $\matR$ and $u\in U$ in place of
$x$. This implies that \emph{all} plots of the functional diffeology
of $\Aut T$ are constant maps, and therefore this diffeology is
indeed discrete.

\subsection{Functional diffeology of $\Aut T$ and its diffeological
group structure}

We shall now make some remarks regarding the diffeological group
structure on $\Aut T$, in relation to its functional diffeology. We
have already established that the latter is discrete and therefore
is the finest possible diffeology on $\Aut T$ (see
\cite{iglesiasBook}, Section 1.20). For this reason it coincides
with the diffeological group structure, the latter being \emph{a
priori} finer than the functional diffeology.

\vspace{1cm}

\noindent University of Pisa \\
Department of Mathematics \\
Via F. Buonarroti 1C\\
56127 PISA -- Italy\\
\ \\
ekaterina.pervova@unipi.it\\

\end{document}